\definecolor{blue}{rgb}{0,0,1}
\definecolor{red}{rgb}{1,0,0}
\definecolor{green}{rgb}{0,1,0}
\definecolor{purple}{rgb}{1,0,1}
\long\def\red#1\endred{\textcolor{red}{#1}}
\long\def\blue#1\endblue{\textcolor{blue}{#1}}
\long\def\purple#1\endpurple{\textcolor{purple}{#1}}
\long\def\green#1\endgreen{\textcolor{green}{#1}}
\newtheorem{defn}{Definition}[section]
\newtheorem{prop}[defn]{Proposition}
\newtheorem{thm}[defn]{Theorem}
\newcommand {\ZZ}{{\mathbb Z}}
\newcommand {\sgn}{{\textrm{sgn}}}
\newcommand {\EEE}{{\mathbf E}}
\newcommand {\C}{{\mathbb C}}
\newcommand {\G}{{\Gamma}}
\newcommand {\Q}{{\mathbb Q}}
\newcommand {\R}{{\mathbb R}}
\newcommand {\g}{{\gamma}}
\newcommand {\HH}{{\mathfrak H}}
\def\mod{\operatorname{mod}}
\def\Im{\operatorname{Im}}
\def\Re{\operatorname{Re}}
\begin{document}
\title{Kernels of L-functions and shifted 
convolutions}
\author{Nikolaos Diamantis (University of Nottingham)\\
nikolaos.diamantis@nottingham.ac.uk}
\date{}

\maketitle

\bigskip

\section{Introduction}

\newcommand\lijn{\bigskip\par\hrule\bigskip\par}

Let $f$ be a normalised cuspidal eigenform 
of even weight $k$: 
 $$f(z)=\sum_{n=1}^{\infty}a(n) e^{2 \pi i n z}$$
  for SL$_2(\ZZ)$ and let $L_f(s)$ denote its L-function. The nature of 
the values $L_f(n)$ for $n \ge k$ has long been of interest and is 
related to Beilinson's conjecture. Thanks to work by Beilinson and 
Deninger-Scholl it is known that
all these values are, up to a power of $1/\pi$, periods in the 
sense of Kontsevich-Zagier (see \cite{KZ} and the references therein). 
Since then, Brunault, Rogers-Zudilin and others have, in some cases, 
established 
{\it explicit} expressions of non-critical values as periods and 
determined other aspects of their algebraic structure.

In \cite{DOS2}, C. O'Sullivan and the author established a new 
characterisation of the field containing an arbitrary value of $L_f(s)$, 
even for $s$ non-integer. The precise statement is reviewed in the 
next section (Prop. \ref{Period}) but it can be summarised as stating 
that ratios of non vanishing L-values of $f$ belong to the field 
generated by the Fourier coefficients of $f$ and of certain double 
Eisenstein series. These double Eisenstein series were constructed 
as, in a sense, generalisations to non-integer indices of the usual 
Rankin-Cohen  brackets. To obtain our characterisation we analysed 
those double Eisenstein series by a method parallel to that of 
\cite{Z}. As in \cite{Z}, the key was the expression of
$L_f(s)L_f(w)$ ($s, w \in \C$) as a Petersson 
scalar product of $f$ against a kernel induced by the double Eisenstein 
series.

On the other hand, \cite{B} gives a real analytic kernel for a 
product of L-values (Cor. 11.12 of \cite{B}). In that work, Brown 
uses Manin's theory of iterated Shimura integrals \cite{M1, M2} in 
order to study multiple modular values. The real analytic kernel is an 
essential ingredient of Brown's proof that each value $L_f(n)$ is, (up to 
a 
power of $\pi i$) a multiple modular value for SL$_2(\ZZ).$ 

Here we show that, in combination with \cite{DOS2}, the kernel defined in 
\cite{B} can be used to give a characterisation of the field of 
non-critical values that involves recognisable objects. 
That was an aim 
we were not able to reach in \cite{DOS2} and the 
expression in terms of recognisable objects we obtain here is a 
step in that direction. The recognisable objects are shifted divisor sum 
Dirichlet series and they have previously been studied in a 
completely 
different context. 

In general, shifted convolution series are the focus of various lines of
investigation because of their natural appearance in moment and 
other analytic problems. Shifted convolutions involving divisor sums, in 
particular, are important for the binary additive divisor problem and 
other related questions. 

The specific shifted convolution appearing here 
turns out to be essentially the one recently studied by M. K{\i}ral in 
\cite{K}, 
and which, in its domain of initial convergence is given by
$$D_h(\alpha, \beta; s):=\sum_{\substack{n \in \mathbb N \\ n>h}} 
\frac{\sigma_{\alpha}(n)\sigma_{\beta}(n-h)}{n^s}, \qquad \alpha, 
\beta, s \in \C, h \in \ZZ.$$ 
Somewhat surprisingly, the obstacle to applying the theory of 
\cite{DOS2} and \cite{B} towards our sought algebraic information is of 
analytic nature. The Rankin-Selberg method we need to apply (in 
\eqref{RS1}) cannot be completed in the 
form most commonly used. This is because of the non-uniform convergence 
of a series we must integrate against. We overcome this problem by 
replacing (in eq. \eqref{t=0}) the 
holomorphic 
Poincar\'e series required for our purposes
with a non-holomorphic Poincar\'e series.

The extra variable introduced in this way allows us to perform the 
analytic continuation of the shifted convolution obtained at the end of 
the computation (see \eqref{finfor}). Without this extra variable, the 
shifted convolution would not converge but now we can use the meromorphic 
continuation of \cite{K} to obtain a well-defined object. This enables us 
to prove the main
\begin{thm}\label{main} Let $f$ be a normalized weight $k$ cuspidal 
eigenform 
for SL$_2(\ZZ)$ and let $L^*_f(s)$ denote its completed L-function. 
Then, for each integer $r \ge 1$ we have
$$
\frac{L^*_f(k+r)}{L^*_f(k+1)} \in \mathcal D_r \mathcal D_{1}
\Q(f) \, \, \text{if $r$ is odd and} \quad
\frac{L^*_f(k+r)}{L^*_f(k+2)} \in \mathcal D_r \mathcal D_{2}
\Q(f) \, \,  \text{if $r$ is even.}
$$
Here $\mathcal D_m$ is the field generated over $\Q$ by 
$$\text{$\pi, i$ and
$D_l(k+m-2, -m-2; n), (n=k-2, \dots, k-2+m; l \in \mathbb N)$, if $m$ 
is odd, }$$ 
by
$$\text{$\pi, i$ and $D_l(k+m-1, -m-1; n), (n=k-1, \dots, k-1+m; l \in 
\mathbb 
N)$, if $m$ is even.}$$ 
Also, $\Q(f)$ is the field generated 
by the Fourier coefficients of $f.$ 
\end{thm}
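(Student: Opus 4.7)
The plan is to combine two different representations of the product $L^*_f(k+r)\, L^*_f(k+\epsilon)$, where $\epsilon\in\{1,2\}$ matches the parity of $r$, and to extract Theorem~\ref{main} from their equality. The first representation is supplied by Prop.~\ref{Period} of \cite{DOS2}: it expresses such a product, up to the Petersson norm of $f$ and elementary factors of $\pi$ and $i$, in terms of the Fourier coefficients of a double Eisenstein series of type $(k+r,k+\epsilon)$. The second representation will come from Brown's real analytic kernel $K$ of \cite{B}, whose defining property is that, for the appropriate parameters, $\s{f}{K}$ equals the same product (up to an elementary factor). Provided that $\s{f}{K}$ can be rewritten as a combination, with coefficients in $\Q(i,\pi,\Q(f))$, of shifted convolutions of the form $D_h(\alpha,\beta;s)$, equating the two representations and dividing by the non-zero value $L^*_f(k+\epsilon)$ will absorb the Petersson norm and the double Eisenstein data into the field $\D_r\D_\epsilon\Q(f)$, which is exactly what the theorem asserts.

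The heart of the argument is the evaluation of $\s{f}{K}$ by a Rankin-Selberg unfolding. Since $K$ is built from a Poincar\'e-series-type sum over $\G_\infty\backslash \mathrm{SL}_2(\ZZ)$, the inner product unfolds to an integral over the strip $\G_\infty\backslash\HH$. Inserting the Fourier expansion of $f$ and carrying out the resulting Mellin-type integral in the height variable should, term by term, produce sums of the form $\sum_{n>h}\sigma_\alpha(n)\sigma_\beta(n-h)/n^s$, i.e.\ the series $D_h(\alpha,\beta;s)$ of \cite{K}. The crux is that the relevant values of the parameters fall outside the region of absolute convergence of these series; this is the analytic obstruction flagged in the introduction. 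Following the hint there, I would overcome it by replacing the holomorphic Poincar\'e series naturally suggested by Brown's kernel with a non-holomorphic Poincar\'e series carrying an extra complex parameter $t$, perform the unfolding for $\Re(t)\gg 0$ where absolute convergence holds, and then take $t\to 0$ using the meromorphic continuation of $D_h(\alpha,\beta;s)$ established in \cite{K}.

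At $t=0$, careful bookkeeping of the indices arising in the unfolding should match the parameters listed in the theorem: the pairs $(\alpha,\beta)$ should land on $(k+m-1,-m-1)$ when $m$ is even and on $(k+m-2,-m-2)$ when $m$ is odd, with the shifts running through $\{k-1,\dots,k-1+m\}$, resp.\ $\{k-2,\dots,k-2+m\}$, so that the resulting $D_l$'s are precisely the generators of $\D_m$. Applying this for $m=r$ and then combining with the analogous expression for $m=\epsilon$, and finally taking the ratio to cancel $L^*_f(k+\epsilon)$, gives the claimed membership. The hardest step, as I expect, is not algebraic but analytic: one must verify that the $t\to 0$ specialisation is regular, i.e.\ that any poles of the individual $D_h(\alpha,\beta;s)$ on the locus $t=0$ cancel in the precise linear combination produced by Brown's kernel, so that the limit is a genuine finite identity. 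Once this finiteness is secured, the comparison with Prop.~\ref{Period} and the division by $L^*_f(k+\epsilon)$ reduce the argument to routine field-theoretic bookkeeping.
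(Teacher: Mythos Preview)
Your outline has the right ingredients but the central mechanism is misidentified. You propose to unfold $\langle f, K\rangle$, insert the Fourier expansion of $f$, and thereby produce the shifted divisor sums $D_l$. But unfolding the Eisenstein factor $\mathcal E^s_{k+r-2a,r}$ against $f\cdot E_{2a}$ is exactly Brown's computation: the Fourier coefficients you meet are $a(n)\sigma_{2a-1}(m)$, and the Mellin integral factors this into $L^*_f(s+k+r-1)L^*_f(s+k+r-2a)$ directly. No $D_l(\alpha,\beta;s)$ ever appears, because those require \emph{two} divisor functions, coming from $E_{2a}$ and from the Fourier expansion of $\mathcal E$, with $f$ playing no role. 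So your proposed unfolding simply reproduces Theorem~\ref{corB} and does not generate the objects in $\mathcal D_r$.

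What the paper actually does is use Brown's kernel to compute the \emph{Fourier coefficients of the double Eisenstein series} $\EEE^*_{s+k+r-1,-s-r+1}(\cdot,s+k+r-2a)$. Since \eqref{holpr} identifies $\EEE^*$ as the holomorphic projection of (a multiple of) $E_{2a}\mathcal E^s_{k+r-2a,r}$, its $l$-th coefficient is extracted via $\langle E_{2a}\mathcal E^s_{k+r-2a,r},P_l\rangle_{k+r}$. It is this inner product, with $P_l$ (not $f$) replaced by the non-holomorphic $P_l^t$, that unfolds: one inserts the Fourier expansion of $E_{2a}\cdot\mathcal E$, obtaining $\sigma_{2a-1}\cdot\sigma_{2s+k+2r-2a-1}$, and the Mellin integral yields the finite linear combination \eqref{finfor} of $D_l$'s. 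The analytic continuation to $t=0$ is then shown to be \emph{termwise} regular via the Estermann zeta expression \eqref{esterm}; no delicate cancellation of poles is needed. This places every Fourier coefficient of $\EEE^*_{k+r,-r}(\cdot,\epsilon)$ in $\mathcal D_r$, after which Prop.~\ref{Period} (applied with $s_0=\epsilon$ and the symmetry $\EEE^*_{k+r,-r}(\cdot,\epsilon)=\EEE^*_{\epsilon,k-\epsilon}(\cdot,k+r)$) finishes. Your plan to ``divide by $L^*_f(k+\epsilon)$ and absorb the Petersson norm'' is the content of Prop.~\ref{Period}, but that proposition needs the field generated by the Fourier coefficients of $\EEE^*$, not an expression for $\langle f,K\rangle$ itself.
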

It should 
be noted that, thanks to the 
explicit form of the analytic continuation in the region we need it, 
we even have an expression in terms of an Estermann zeta function.

It would be interesting to see if it is possible to determine the 
arithmetic nature of values of our shifted divisor sum Dirichlet series 
so that we obtain information about quotients of $L$-values.

%


\section{The field of L-values}
Let $f$ be a weight $k$ cuspidal eigenform 
 $$f(z)=\sum_{n=1}^{\infty}a(n) e^{2 \pi i n z}$$
  for SL$_2(\ZZ)$ normalised so that $a(1)=1$. Let $L_f(s)$ denote its 
L-function and consider its 
completed version
$$L^*_f(s)=(2 \pi)^{-s} \Gamma(s) L_f(s).$$
We will now define the double Eisenstein series introduced in 
\cite{DOS2}. Set $B=\{
\left ( \begin{smallmatrix} 1 & n \\ 0 & 1 \end{smallmatrix} \right )
; n \in \ZZ\}.$ Also set,
$$c_{\g}:=c \qquad \text{and $j(\g, z)=cz+d$} \qquad \text{for} \, \, \, 
\left ( \begin{smallmatrix} * & * \\ c & d \end{smallmatrix} \right ) \in 
\text{SL}_2(\ZZ).
$$
With the conversion $-\pi < \text{arg}(z) \le \pi$ define
\begin{equation}\label{dbleis3}
\EEE_{s, k-s}(z,w):= \sum_{\substack{\g, \delta \in B \backslash \G 
\\ c_{\g \delta^{-1}} > 0}} (c_{\g \delta^{-1}})^{w-1} 
\left(\frac{j(\g, z)}{j(\delta, z)} \right )^{-s} j(\delta, z)^{-k}.
\end{equation}
As shown in \cite{DOS2}, this series can be though of as a Rankin-Cohen 
bracket of not necessarily integer index, applied to pair of Eisenstein 
series. 
We also define the completed double Eisenstein series
\begin{equation}\label{complete}
\EEE^*_{s, k-s}(z,w):=\frac{ \G(s) \G(k-s) \G(k-w) \zeta(1-w+s)\zeta(1-w+k-s)}
{ e^{-s i \pi/2}2^{3-w}\pi^{k+1-w}   \G(k-1) } \EEE_{s, k-s}(z,w).
\end{equation}
In \cite{DOS2} it is proved: 
\begin{thm}\label{maineis} The  series $\EEE^*_{s, k-s}(z,w)$ 
converges absolutely and uniformly on compact sets for
which $2 < \Re (s) < k - 2$ and $\Re (w) < \Re(s)-1, k-1-\Re(s).$ It
has an 
analytic continuation to all $s,w \in \C$ and, as a function of $z$, it 
is a weight $k$ cusp form for $\G$. We have
\begin{equation}\label{lslw}
\langle \EEE^*_{s, k-s}(\cdot,w), f \rangle =  L^*_f(s) L^*_f(w)
\end{equation}
for any normalised cuspidal eigenform $f$ of weight $k.$
\end{thm}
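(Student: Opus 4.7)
The plan is to prove the three claims in order: first the region of absolute convergence, then the kernel identity \eqref{lslw} by unfolding the Petersson product, and finally the cuspidality in $z$ together with the analytic continuation in $(s,w)$ via a spectral expansion. The preliminary step driving the argument is a substitution separating the two coset variables: for each fixed $\delta \in B\backslash \G$, set $\eta = \g\delta^{-1}$; as $\g$ ranges over $B\backslash \G$ so does $\eta$, and $c_{\g\delta^{-1}} = c_\eta$. The cocycle identity $j(\eta\delta, z) = j(\eta,\delta z)\,j(\delta,z)$ rewrites \eqref{dbleis3} as
\begin{equation*}
\EEE_{s,k-s}(z,w) = \sum_{\delta \in B\backslash \G} j(\delta,z)^{-k} \, P_{s,w}(\delta z),
\qquad
P_{s,w}(z'):=\sum_{\substack{\eta \in B\backslash \G \\ c_\eta>0}} c_\eta^{w-1} j(\eta, z')^{-s}.
\end{equation*}
The inner $P_{s,w}$ is dominated by $\sum_{c\ge 1} c^{\Re(w)-1}\sum_{(d,c)=1}|cz'+d|^{-\Re(s)}$, converging absolutely provided $\Re(s)>2$ and $\Re(w)<\Re(s)-1$. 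Standard Eisenstein bounds on $P_{s,w}(\delta z)$ then secure convergence of the outer $\delta$-sum for $\Re(s)<k-2$, while the inequality $\Re(w)<k-1-\Re(s)$ emerges from matching growth rates at the cusp after the $\delta$-substitution.

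For the kernel identity, I would unfold the Petersson product in this region. The identity $\overline{f(z)}\,j(\delta,z)^{-k}\, y^k = \Im(\delta z)^k\, \overline{f(\delta z)}$, a consequence of weight-$k$ modularity of $f$, makes the sum over $\delta$ collapse onto a strip integral,
\begin{equation*}
\s{\EEE_{s,k-s}(\cdot, w)}{f} = \int_0^\infty\int_0^1 P_{s,w}(z)\,\overline{f(z)}\, y^{k-2}\,dx\,dy.
\end{equation*}
Inserting the Fourier expansion of $f$ and writing $P_{s,w}(z) = \sum_{c\ge1} c^{w-1}\sum_{d\bmod c,\,(d,c)=1}(cz+d)^{-s}$, the $x$-integration (after extending the $d$-sum to $\ZZ$ via periodicity of $\overline{f}$) produces Ramanujan sums $r_c(m)$. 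The classical evaluation $\sum_{c\ge1} r_c(m)/c^{\sigma} = \sigma_{1-\sigma}(m)/\zeta(\sigma)$ combined with Rankin's convolution $\sum_m a(m)\sigma_\alpha(m) m^{-\sigma} = L_f(\sigma)L_f(\sigma-\alpha)/\zeta(2\sigma-\alpha-k+1)$ factorises the $(c,m)$-double sum into a product of two Dirichlet series in $f$, while a final Mellin integration in $y$ supplies the Gamma factors $\G(s)\G(k-s)\G(k-w)$. Collecting the constants as in \eqref{complete} yields exactly $L_f^*(s)\,L_f^*(w)$.

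Cuspidality and analytic continuation then follow. A direct Fourier computation shows that the restriction $c_{\g\delta^{-1}}>0$ in \eqref{dbleis3}, together with the completion factors of \eqref{complete}, forces the constant term of $\EEE^*_{s,k-s}(\cdot, w)$ at $\infty$ to vanish, placing it in $S_k(\G)$. The Petersson formula applied to an orthonormal basis $\{f_j\}$ of Hecke eigenforms of weight $k$ then yields
\begin{equation*}
\EEE^*_{s,k-s}(z,w) = \sum_j \frac{L_{f_j}^*(s)\,L_{f_j}^*(w)}{\s{f_j}{f_j}}\, f_j(z),
\end{equation*}
and since the right-hand side is entire in $(s,w)$ and the sum is finite, this extends the left-hand side to an entire function on $\C^2$. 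The hard part will be the delicate bookkeeping of Gamma, zeta and phase factors in the kernel computation: the Lipschitz/Ramanujan manipulations must be tracked through the branch convention $-\pi < \arg(z) \le \pi$, which is precisely what produces the factor $e^{-si\pi/2}$ of \eqref{complete}, and the interchange of summation and integration must be justified carefully within the strip of convergence.
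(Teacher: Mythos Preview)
The paper does not prove this theorem; it simply quotes it from \cite{DOS2}. Your outline is essentially the argument of that reference: the substitution $\eta=\gamma\delta^{-1}$ to factor $\EEE_{s,k-s}$ as $\sum_\delta (P_{s,w}|_k\delta)$, the unfolding of the Petersson integral to a strip, and the Lipschitz/Ramanujan/Rankin computation are exactly the expected steps, and your identification of the analytic continuation via the finite expansion in a Hecke eigenbasis of $S_k$ is the right endgame.

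Two small remarks. First, your use of ``Petersson formula'' for the eigenbasis expansion is nonstandard; that name usually refers to the Kloosterman-sum identity for Fourier coefficients of Poincar\'e series. What you mean is just the orthogonal decomposition of $S_k$. Second, the cuspidality step is asserted a bit quickly. The cleanest way to see it, once you have written $P_{s,w}(z)=C\sum_{m\ge1}m^{s-1}\sigma_{w-s}(m)\zeta(1+s-w)^{-1}e^{2\pi imz}$ via Lipschitz, is to interchange sums (valid in the region of absolute convergence) and recognise
\[
\EEE_{s,k-s}(z,w)=\sum_{m\ge1}A_m\sum_{\delta\in B\backslash\G}j(\delta,z)^{-k}e^{2\pi im\delta z}=\sum_{m\ge1}A_m\,P_m(z),
\]
a convergent combination of holomorphic Poincar\'e series, hence automatically in $S_k$. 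This also makes the logical order airtight: cuspidality first, then the eigenbasis expansion, then analytic continuation as an entire function of $(s,w)$.
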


With this theorem, we characterise the field of values of 
$L^*_f(s)$ using a method that is motivated by Zagier's technique 
(\cite{Z}). The latter is based on standard (positive integer index) 
Rankin-Cohen 
brackets. We will state and prove a slightly more general version of the 
characterisation given in \cite{DOS2}. 

For cusp forms $f_1, f_2, \dots$ we will be denoting by $\Q(f_1, f_2, 
\dots)$ the field obtained by adjoining to $\Q$ the Fourier coefficients 
of $f_1, f_2 \dots$. 
 \begin{prop}\label{Period} 
Let $f$ be a normalised cuspidal eigenform 
$f$ of weight $k$ and let $s_0 \in \C$ such that
$L_f^*(s_0) \ne 0.$ Then, for all $s, w \in \C$, with $L^*_f(s) \ne 0,$ 
$$\frac{L^*_f(w)}{L^*_f(s)} \in \Q(\EEE^*_{s_0, k-s_0}(\cdot, w), 
\EEE^*_{s_0, 
k-s_0}(\cdot, s), f).$$
\end{prop}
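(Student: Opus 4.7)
The plan is to reduce the statement to Theorem \ref{maineis} and then to extract the ratio of $L$-values by a Zagier-style Galois argument. First I would apply \eqref{lslw} at the fixed value $s_0$, once with second argument $w$ and once with second argument $s$, and divide. Using the hypotheses $L^*_f(s_0)\neq 0$ and $L^*_f(s)\neq 0$, this yields
$$
\frac{L^*_f(w)}{L^*_f(s)} = \frac{\langle \EEE^*_{s_0,k-s_0}(\cdot,w),\,f\rangle}{\langle \EEE^*_{s_0,k-s_0}(\cdot,s),\,f\rangle}.
$$

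Set $g=\EEE^*_{s_0,k-s_0}(\cdot,w)$ and $h=\EEE^*_{s_0,k-s_0}(\cdot,s)$; by Theorem \ref{maineis} both are weight $k$ cusp forms for $\mathrm{SL}_2(\ZZ)$. Let $\{f_1=f,f_2,\dots,f_d\}$ be a basis of normalised Hecke eigenforms of the space of weight $k$ cusp forms. By multiplicity one, $g$ and $h$ have unique eigenform expansions
$$
g=\sum_{i=1}^d \lambda_i f_i, \qquad h=\sum_{i=1}^d \mu_i f_i,
$$
with $\lambda_i=\langle g,f_i\rangle/\langle f_i,f_i\rangle$ and $\mu_i=\langle h,f_i\rangle/\langle f_i,f_i\rangle$. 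The common factor $\langle f,f\rangle$ (as well as $L^*_f(s_0)$) cancels when we form $\lambda_1/\mu_1$, so the displayed quotient equals $\lambda_1/\mu_1$.

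The key step is to show that $\lambda_1\in\Q(g,f)$ and, by the same argument, $\mu_1\in\Q(h,f)$. Comparing the first $d$ Fourier coefficients in the decomposition of $g$ gives a linear system whose coefficient matrix is the invertible matrix $\bigl(a_i(n)\bigr)_{1\le i,n\le d}$, so by Cramer's rule $\lambda_1$ lies in the algebraic extension $\Q(g,f_1,\dots,f_d)$ of $\Q(g,f)$. For any $\sigma\in\mathrm{Aut}(\C/\Q(g,f))$, the action of $\sigma$ on Fourier coefficients sends each normalised eigenform to another with conjugate Hecke eigenvalues and hence permutes $\{f_1,\dots,f_d\}$ via some permutation $\pi$ with $\pi(1)=1$. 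Applying $\sigma$ to $g=\sum_i\lambda_i f_i$ and using the uniqueness of the eigenform decomposition then yields $\sigma(\lambda_i)=\lambda_{\pi(i)}$, and in particular $\sigma(\lambda_1)=\lambda_1$. A standard Galois-theoretic argument in characteristic zero now gives $\lambda_1\in\Q(g,f)$, and likewise $\mu_1\in\Q(h,f)$. Taking the quotient proves
$$
\frac{L^*_f(w)}{L^*_f(s)}=\frac{\lambda_1}{\mu_1}\in \Q(g,h,f).
$$

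The main subtlety is the Galois step, and in particular the fact that a field automorphism of $\C$ fixing the Fourier coefficients of $f$ really does permute the basis $\{f_i\}$ rather than sending some $f_i$ outside it. This relies on multiplicity one together with the fact that the Hecke operators on $S_k$ have characteristic polynomials with coefficients in $\Q$, so the Fourier coefficients $a_i(p)$ for varying $i$ are Galois conjugates of $a_1(p)=a(p)$ over $\Q$. Once this is in hand, the rest of the proof is essentially formal.
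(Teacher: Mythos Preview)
Your proof is correct and follows the same route as the paper: reduce $L^*_f(w)/L^*_f(s)$ to a ratio of Petersson products via \eqref{lslw}, then show that $\langle g,f\rangle/\langle f,f\rangle\in\Q(g,f)$ for a cusp form $g$ and a normalised eigenform $f$. The only difference is that the paper outsources this last step to Shimura's Lemma~4 in \cite{S}, whereas you supply the Zagier--Shimura Galois argument explicitly; one small imprecision is your claim that the $a_i(p)$ are all Galois conjugates of $a(p)$ over $\Q$ (in general the eigenforms split into several Galois orbits), but what your argument actually uses---that any $\sigma$ permutes the set $\{f_i\}$ and fixes $f$---follows already from the rationality of the Hecke action together with multiplicity one.
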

 \begin{proof}
With Th. \ref{maineis} we have
$$\frac{L^*_f(w)}{L^*_f(s)}=\frac{L^*_f(s_0)L^*_f(w)}{L^*_f(s_0)L^*_f(s)} 
=\frac{\langle \EEE^*_{s_0, k-s_0}(\cdot, s), f  \rangle}{\langle 
\EEE^*_{s_0, k-s_0}(\cdot, w), f \rangle}
=\frac{\langle \EEE^*_{s_0, k-s_0}(\cdot, s), f  \rangle/\langle 
f, f \rangle}{\langle 
\EEE^*_{s_0, k-s_0}(\cdot, w), f \rangle /\langle f, f\rangle}.
$$
By a general result (see, e.g. \cite{S}, Lemma 4), the numerator of the 
last fraction belongs to $\Q(\EEE^*_{s_0, k-s_0}(\cdot, s), f)$. Likewise 
for 
the denominator. This implies the result.
\end{proof}
We state a result which can be deduced from this proposition.

According to 
Manin's Periods Theorem \cite{M} there are $\omega^+(f), \omega_-(f) \in 
\R$ such that
$$L^*_f(s)/\omega^+(f), \qquad L^*_f(w)/\omega^-(f) \in \Q(f)$$
for all $s, w$ with $1 \le s, w \le k-1$ and $s$ even, $w$ odd. 
Although we know that $\omega^{\pm}(f)$ are periods (cf. Sect. 3.4 of 
\cite{KZ}) and that (when appropriately normalised) their product is 
$\langle f, f \rangle$, little is known about their quotient.
However, 
Prop. \ref{Period} 
implies the following characterisation of the field to which their 
quotient belongs.
 \begin{prop} Let $f$ be a normalised cuspidal 
eigenform of weight $k$ for SL$_2(\ZZ)$ such that $L^*_f(k/2) \ne 0$. (In 
particular, $k \equiv 0 \mod 4.$ ) Then
$$\frac{\omega^+(f)}{\omega^-(f)} \in \Q(\pi, i,  \EEE^*_{k/2, 
k/2}(\cdot, 
4), f).$$
\end{prop}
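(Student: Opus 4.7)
The plan is to combine Theorem \ref{maineis}, evaluated at the specific values $s = k/2$ and $w = 4$, with Manin's Periods Theorem. Theorem \ref{maineis} gives
$$\langle \EEE^*_{k/2,k/2}(\cdot, 4), f \rangle \;=\; L^*_f(k/2)\, L^*_f(4).$$
The hypothesis $L^*_f(k/2)\ne 0$, together with the functional equation $L^*_f(s)=i^k L^*_f(k-s)$, forces $i^k=1$ and hence $k\equiv 0 \mod 4$, which is the parenthetical in the statement. In particular $k/2$ is even, and since SL$_2(\ZZ)$ admits cusp forms only for $k\ge 12$, both $k/2$ and $4$ lie in the even critical range $\{2,4,\ldots,k-2\}$. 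By Manin, both $L^*_f(k/2)/\omega^+(f)$ and $L^*_f(4)/\omega^+(f)$ belong to $\Q(f)$, so
$$L^*_f(k/2)\, L^*_f(4) \;=\; \omega^+(f)^2 \cdot r, \qquad r\in \Q(f),$$
with $r\ne 0$ under the implicit assumption $L^*_f(4)\ne 0$.

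Next, dividing the first display by $\langle f,f\rangle$ and applying Shimura's Lemma (\cite{S}, Lemma 4), exactly as in the proof of Proposition \ref{Period}, yields
$$\frac{\omega^+(f)^2 \cdot r}{\langle f,f\rangle} \;=\; \frac{\langle \EEE^*_{k/2,k/2}(\cdot, 4), f\rangle}{\langle f,f\rangle} \;\in\; \Q(\EEE^*_{k/2,k/2}(\cdot, 4),\, f).$$
Since $r\in \Q(f)$ is non-zero, this simplifies to
$$\frac{\omega^+(f)^2}{\langle f,f\rangle} \;\in\; \Q(\EEE^*_{k/2,k/2}(\cdot, 4),\, f).$$

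To conclude, we invoke the classical fact (recalled in the paragraph preceding the proposition) that, under the appropriate normalisation, $\omega^+(f)\omega^-(f)$ differs from $\langle f,f\rangle$ by a factor in $\Q(\pi, i, f)$. Writing
$$\frac{\omega^+(f)}{\omega^-(f)} \;=\; \frac{\omega^+(f)^2/\langle f,f\rangle}{\omega^+(f)\omega^-(f)/\langle f,f\rangle}$$
and combining the two inclusions immediately yields $\omega^+(f)/\omega^-(f) \in \Q(\pi, i, \EEE^*_{k/2,k/2}(\cdot, 4), f)$, as required.

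The principal loose end is the implicit non-vanishing $L^*_f(4)\ne 0$ needed to ensure $r\ne 0$; without it, the identity $\langle \EEE^*_{k/2,k/2}(\cdot,4), f\rangle = 0$ is trivial and conveys no information about $\omega^+(f)^2/\langle f,f\rangle$. Pinning down the exact form of the normalisation constant relating $\omega^+(f)\omega^-(f)$ to $\langle f,f\rangle$, and checking that it genuinely lies in $\Q(\pi, i, f)$, is the other place where one must be careful; but in the standard normalisations this reduces to an explicit rational multiple of a power of $\pi i$, so no further obstruction arises.
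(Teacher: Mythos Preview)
Your argument is correct and takes a genuinely different route from the paper's. The paper applies Proposition~\ref{Period} with $s_0=k/2$, $w=4$, $s=3$, obtaining
\[
\frac{\omega^+(f)}{\omega^-(f)}\in\Q\bigl(\EEE^*_{k/2,k/2}(\cdot,3),\,\EEE^*_{k/2,k/2}(\cdot,4),\,f\bigr),
\]
and then eliminates the unwanted generator $\EEE^*_{k/2,k/2}(\cdot,3)$ by identifying it (via Prop.~2.4 of \cite{DOS2}) with a constant multiple of the Rankin--Cohen bracket $[E_{k/2},E_{k/2}]_2$, a cusp form with rational Fourier coefficients, the constant lying in $\Q(\pi,i)$. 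You instead exploit the fact that both $k/2$ and $4$ are \emph{even} critical integers, so a single application of Theorem~\ref{maineis} with $w=4$ already yields $\omega^+(f)^2$ rather than $\omega^+(f)\omega^-(f)$; the passage to the ratio then comes from the classical relation $\omega^+(f)\omega^-(f)\sim\langle f,f\rangle$. Your approach is more economical in that it avoids the Rankin--Cohen identification entirely, at the cost of importing the period relation for $\langle f,f\rangle$ as an external input; the paper's proof stays within its own toolkit.

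Your flagged loose end about $L^*_f(4)\ne 0$ is not actually an issue: since $k\ge 12$, the functional equation gives $L^*_f(4)=L^*_f(k-4)$, and $k-4$ lies in the region of absolute convergence of the Euler product, so the value is automatically nonzero. (The paper's proof has the analogous implicit hypothesis $L^*_f(3)\ne 0$, resolved in the same way.) The other point you raise, about the constant relating $\omega^+(f)\omega^-(f)$ to $\langle f,f\rangle$, is indeed where care is needed; the paper only asserts that \emph{some} normalisation makes the product equal to $\langle f,f\rangle$, and since any two admissible normalisations differ by factors in $\Q(f)^\times$, the ratio $\omega^+(f)\omega^-(f)/\langle f,f\rangle$ lies in $\Q(f)\subset\Q(\pi,i,f)$ regardless.
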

\begin{proof}
Set $w=4$, $s=3$ and $s_0=k/2$ in Prop. \ref{Period}. Since 
$L^*_f(4)/\omega^+(f), L^*_f(3)/\omega^-(f) \in \Q(f)$,
we deduce
$$\frac{\omega^+(f)}{\omega^-(f)} \in \Q(
\EEE^*_{k/2, k/2}(\cdot, 3), \EEE^*_{k/2, k/2}(\cdot, 4), f).$$
With Prop. 2.4 of \cite{DOS2}, 
$$\EEE^*_{k/2, k/2}(\cdot, 3)=\EEE^*_{\left( \frac{k}{2}-2 \right )+2, 
\left( \frac{k}{2}-2 \right )+2}(\cdot, 2+1)$$ equals (up to an factor in 
$\Q(\pi, i)$) the Rankin-Cohen 
bracket 
\begin{equation*} \label{rco}
[E_{k/2},E_{k/2}]_2 
:= \sum_{r=0}^2 (-1)^r \binom{k_1+1}{2-r}
\binom{k_2+1}{r}E_{k/2}^{(r)}E_{k/2}^{(2-r)}
\end{equation*}
where 
$$E_{2m}(z)=\sum_{\g \in \Gamma_{\infty} \backslash
\G} j(\gamma, z)^{-2m}=
-\frac{B_{2m}}{4m}
+\sum_{n=1}^{\infty} 
\sigma_{2m-1}(n)e^{2 \pi i n z}.$$
From the general theory of Rankin-Cohen brackets, $[E_{k/2},E_{k/2}]_2$ 
 is a cusp from, and 
it is clear that it has
rational Fourier coefficients. 

  \end{proof}
{\it Remark.} In \cite{PP} (5.13), it is shown that, in a different 
setting (odd 
weight $k$ and higher level of the group), this quotient does have a 
simple explicit expression. 

\section{Brown's kernel} \label{FB} 
We maintain the notation of the previous section.
In \cite{B}, F. Brown gives a kernel for a certain product of values of 
$L_f(s)$.

Let $i, j \ge 0$ be integers and let $s$ be such that $i+j+2 \Re(s)>2.$  
For $z=x+iy$ set
$$\mathcal E_{i, j}^s(z)=\frac{1}{2}\sum_{\g \in B \backslash
\G} \frac{y^s}{j(\gamma, z)^{i+s} j(\gamma, \bar z)^{j+s}}.
$$
This series converges absolutely in the indicated region and satisfies
\begin{equation} \label{trlaw} 
\mathcal E_{i, j}^s(\g z)=j(\g, z)^i j(\g, \bar z)^j \mathcal E_{i, 
j}^s(z) \qquad \text{for all $\g \in \G$ and $z \in \HH$}.
\end{equation}
It further has a meromorphic continuation to the entire complex plane
since 
\begin{equation} \mathcal E_{i, j}^s(z)=y^{-\frac{i+j}{2}}E_{i-j}(s+
\frac{i+j}{2})(z)
\end{equation}  
where $E_m(s)(z)$ stands for the weight $m$ Eisenstein series
$$E_m(s)(z)=\sum_{\g \in \Gamma_{\infty} \backslash
\G} \text{Im}(\g z)^s \left( \frac{|j(\gamma, z)|}{j(\gamma, 
z)} \right )^{m}.$$
Here $\Gamma_{\infty}$ is the stabiliser of $\infty$.
 
For $m \in \ZZ_+$ let $\langle \cdot, \cdot \rangle_m$ be the pairing on 
real-analytic functions whose product vanishes exponentially at $\infty$ 
which is 
given by the formula of the Petersson scalar product in weight $m$: 
$$\langle g, h \rangle_m= \int_{\mathfrak F}g(z) \overline{h(z)} 
y^{m}\frac{dx dy}{y^2}$$
where $\mathfrak F$ is a fundamental domain of SL$_2(\ZZ)$. Notice that 
we do not require $g, h$ to be $\Gamma$-invariant as in the case of 
the actual Petersson scalar product.

Corollary 11.12 of \cite{B} implies:
\begin{thm}
\label{corB} 
Let $r \ge 1$ be an integer and let $a, b \ge 2$ be integers 
such that 
\begin{equation}\label{k}
k=2a+2b-2r-2.
\end{equation}
Then for each normalised cuspidal eigenform $f$ of weight k and for 
each $s \in \C$ , we have
\begin{multline}
\label{kernelB}
\frac{\pi^{2a-s-k-r}}{2^{1-2a}}\Gamma(s+k+r-2a) \zeta(2s+k+2r-2a) \langle
E_{2a} \mathcal E_{k+r-2a, r}^s, f \rangle_{k+r}  \\
=L^*_f(s+k+r-1) L^*_f(s+k+r-2a).
\end{multline}
\end{thm}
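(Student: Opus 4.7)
The plan is to prove \eqref{kernelB} by a standard Rankin-Selberg unfolding against the non-holomorphic Eisenstein series $\mathcal E^s_{k+r-2a,r}$, followed by a Mellin computation in $y$ and a Ramanujan-type factorisation of the resulting Dirichlet series. I would work in the region of $s$ where every series below converges absolutely (namely $\Re(s)>2-b$, so that the $\mathcal E^s$-series converges), and then extend the identity to all $s$ by meromorphic continuation.

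First I would substitute the defining series
$$\mathcal E^s_{k+r-2a,r}(z) \;=\; \tfrac{1}{2}\sum_{\gamma\in B\backslash \Gamma} y^s\, j(\gamma,z)^{-(k+r-2a+s)}\, j(\gamma,\bar z)^{-(r+s)}$$
into $\int_{\Gamma\backslash\HH} E_{2a}(z)\,\mathcal E^s_{k+r-2a,r}(z)\,\overline{f(z)}\,y^{k+r-2}\,dx\,dy$ and unfold each term by the change of variables $z\mapsto \gamma^{-1}z$. Summing over $\gamma\in B\backslash\Gamma$ retiles the integral onto the strip $B\backslash\HH=[0,1]\times(0,\infty)$; the weight-$2a$ transformation of $E_{2a}$, the weight-$k$ (antiholomorphic) transformation of $\overline{f}$, the substitutions $y\mapsto y/|j(\gamma,z)|^2$ in both the explicit $y^s$ and $y^{k+r-2}$, and the Jacobian $|j(\gamma,z)|^{-4}$ from $dx\,dy$ conspire so that every $j(\gamma,z)$-, $\overline{j(\gamma,z)}$- and $|j(\gamma,z)|$-exponent vanishes. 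This yields
$$\langle E_{2a}\,\mathcal E^s_{k+r-2a,r},\,f\rangle_{k+r} \;=\; \tfrac{1}{2}\int_0^{\infty}\!\!\int_0^1 E_{2a}(z)\,\overline{f(z)}\,y^{s+k+r-2}\,dx\,dy.$$

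Next I would insert the Fourier expansions $E_{2a}(z)=-B_{2a}/(4a)+\sum_{n\ge 1}\sigma_{2a-1}(n)e^{2\pi i n z}$ and $\overline{f(z)}=\sum_{m\ge 1}\overline{a(m)}e^{-2\pi i m\bar z}$. The $x$-integration on $[0,1]$ kills the contribution of the constant term of $E_{2a}$ (since $f$ is cuspidal) and forces $m=n$ in the remaining product, leaving $\sum_{n\ge 1}\sigma_{2a-1}(n)\overline{a(n)}e^{-4\pi n y}$; a standard gamma integral in $y$ then produces
$$\langle E_{2a}\,\mathcal E^s_{k+r-2a,r},\,f\rangle_{k+r} \;=\; \frac{\Gamma(s+k+r-1)}{2\,(4\pi)^{s+k+r-1}}\sum_{n\ge 1}\frac{\sigma_{2a-1}(n)\,\overline{a(n)}}{n^{s+k+r-1}}.$$
Since $f$ is a Hecke eigenform (with real coefficients at level one), a short Euler-product comparison gives the identity $\sum_{n}a(n)\sigma_{2a-1}(n)n^{-w}=L_f(w)L_f(w-2a+1)/\zeta(2w-k-2a+2)$; specialising to $w=s+k+r-1$ makes the $\zeta$-denominator equal to $\zeta(2s+k+2r-2a)$, precisely the factor appearing on the left of \eqref{kernelB}.

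Multiplying through by this $\zeta$-value together with the prefactor $\pi^{2a-s-k-r}\Gamma(s+k+r-2a)/2^{1-2a}$ and rewriting each $L_f$ via $L^*_f(w)=(2\pi)^{-w}\Gamma(w)L_f(w)$ produces $L^*_f(s+k+r-1)L^*_f(s+k+r-2a)$; all that remains is to check that the net power of $2\pi$ matches $(2\pi)^{-(2s+2k+2r-2a-1)}$, which is a routine arithmetic verification. The step I expect to be most delicate is the unfolding bookkeeping: the summand of $\mathcal E^s$ is a mixed-weight $(k+r-2a,\,r)$ object, and one must check that its holomorphic and antiholomorphic automorphy factors combine with those of $E_{2a}$, $\overline f$, $y^{k+r-2}$ and $dx\,dy$ so that the total holomorphic and antiholomorphic weights both come out to $k+r$, matching the pairing $\langle\cdot,\cdot\rangle_{k+r}$. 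Once this cancellation is verified the remainder of the argument is a mechanical Mellin/Rankin-Selberg computation.
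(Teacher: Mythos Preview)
Your approach is correct and is exactly the standard Rankin--Selberg unfolding that underlies this identity. Note, however, that the paper itself does not prove Theorem~\ref{corB}: it simply records it as a consequence of Corollary~11.12 of Brown~\cite{B}. So there is no ``paper's own proof'' to compare with here; what you have written is essentially the argument one finds in~\cite{B}.

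One small bookkeeping point in your unfolding step: the $\tfrac12$ in the definition of $\mathcal E^s_{k+r-2a,r}$ is already there to compensate for the fact that $B\backslash\Gamma$ contains both $\gamma$ and $-\gamma$ as distinct cosets. Since $i+j=k+2r-2a$ is even, the summand is the same for $\gamma$ and $-\gamma$, so $\tfrac12\sum_{B\backslash\Gamma}=\sum_{\Gamma_\infty\backslash\Gamma}$; unfolding that against the fundamental domain then gives $\int_0^\infty\!\int_0^1$ \emph{without} an additional $\tfrac12$ in front. With that correction (and attention to the paper's nonstandard normalisation of $E_{2a}$, whose constant term is $-B_{2a}/(4a)$ rather than $1$), the constants do assemble to \eqref{kernelB} as you indicate. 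Everything else --- the vanishing of the constant-term contribution by cuspidality, the Mellin integral in $y$, the Ramanujan factorisation $\sum_n a(n)\sigma_{2a-1}(n)n^{-w}=L_f(w)L_f(w-2a+1)/\zeta(2w-k-2a+2)$, and the extension to all $s$ via the meromorphic continuation of $\mathcal E^s$ --- is correct.
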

Theorem \ref{corB} combined with Theorem \ref{maineis} implies that 
$\EEE^*_{s+k+r-1, -s-r+1}(z, s+k+r-2a)$
is a holomorphic projection of 
$$\frac{\pi^{2a-s-k-r}}{2^{1-2a}}\Gamma(s+k+r-2a) \zeta(2s+k+2r-2a) 
E_{2a} \mathcal E_{k+r-2a, r}^s$$ in the sense that
$\EEE^*_{s+k+r-1, -s-r+1}(z, s+k+r-2a)$ is a holomorphic cusp form and
for all weight $k$ forms $f$ we have
\begin{multline}
\label{holpr}
\frac{\pi^{2a-s-k-r}}{2^{1-2a}}\Gamma(s+k+r-2a) \zeta(2s+k+2r-2a) \langle
E_{2a} \mathcal E_{k+r-2a, r}^s, f \rangle_{k+r}  \\
=\langle \EEE^*_{s+k+r-1, -s-r+1}(z, s+k+r-2a), f \rangle_{k}.
\end{multline}
\section{Fourier coefficients}
In this section we will compute the Fourier coefficients of  
$\EEE^*_{s+k+r-1, -s-r+1}(z, s+k+r-2a)$ so that we can apply Prop. 
\ref{Period} to deduce Theorem \ref{main}.

By the formula for Fourier coefficients of a cusp form in terms of inner 
products against Poincar\'e series, \eqref{holpr} implies that 
\begin{multline}
\label{Fexp}
\text{the $l$-th Fourier coefficient of $\EEE^*_{s+k+r-1, -s-r+1}(z, 
s+k+r-2a)=$} \\
\frac{\pi^{2a-s-k-r} (4 \pi l)^{k-1}}{2^{1-2a}\Gamma(k-1)}
\Gamma(s+k+r-2a) \zeta(2s+k+2r-2a)
     \langle 
E_{2a} \mathcal E_{k+r-2a, r}^s, P_l\rangle_{k+r}
\end{multline}
where
$$P_l(z)=\sum_{\g \in \Gamma_{\infty} \backslash \G} \frac{e^{2 \pi i l \g z}}{j(\g, z)^k}$$
is the $l$-th Poincar\'e series of weight $k$.

Therefore, the determination of the field of L-values appearing in 
Theorem \ref{Period}
reduces to the computation of the inner products
\begin{equation}\label{RS1}
\langle E_{2a} \mathcal E_{k+r-2a, r}^s, P_l\rangle_{k+r}. 
\end{equation}
We will compute this integral using the Rankin-Selberg method. However, 
we have to modify slightly the method because otherwise, along the way 
we obtain a series which cannot be interchaged with integration.
To overcome this difficulty we, 
essentially, regularise the
integral. Let 
$$P_l^t(z):=\sum_{\g \in \Gamma_{\infty} \backslash \G} 
\Im(\g z)^{\bar t}\frac{e^{2 \pi i l \g z}}{j(\g, z)^k}$$
is the $l$-th non-holomorphic Poincar\'e series of weight $k$. It is 
clear that
\begin{equation} \label{t=0}
\langle E_{2a} \mathcal E_{k+r-2a, r}^s, P_l\rangle_{k+r}=
\langle E_{2a} \mathcal E_{k+r-2a, r}^s, P_l^t\rangle_{k+r}|_{t=0}.  
\end{equation}
Let $\Re (t) \gg 0$. With \eqref{trlaw} we now have
\begin{multline}
\langle E_{2a} \mathcal E_{k+r-2a, r}^s, P_l^t\rangle_{k+r}=
\int_{\mathfrak F} \sum_{\g \in \Gamma_{\infty} \backslash \G} \frac{E_{2a} (\g z)}{j(\g, z)^{2a}}
\frac{ \mathcal E_{k+r-2a, r}^s(\g z)}{j(\g, z)^{k+r-2a} j(\g, \bar z)^r}
 \Im(\g z)^{ t} \frac{\overline{e^{2 \pi i l \g z}}}{j(\g, \bar z)^k} y^{k+r} d \mu z\\
=\int_{\mathfrak F} \sum_{\g \in \Gamma_{\infty} \backslash \G} E_{2a} (\g z)
\mathcal E_{k+r-2a, r}^s(\g z) \Im(\g z)^{t+k+r} 
\overline{e^{2 \pi i l \g z}} d \mu \g z.
\end{multline}
Since $E_{2a} \mathcal E_{k+r-2a, r}^s$ has polynomial growth at the 
cusps, we have 
uniform convergence when $\Re t$ is large enough and we can complete the unfolding as usual. Then
the last integral becomes 
\begin{equation}\label{RS}
\int_0^{\infty}  \int_0^1 E_{2a} (z)
\mathcal E_{k+r-2a, r}^s(z) y^{t+k+r-2} e^{-2 \pi l y} e^{-2 \pi i l x} dx dy.
\end{equation}
To complete the computation we need the Fourier expansion of $E_m(s)(z)$ 
as given 
in Prop. 11.2.16 of \cite{CS} (or Th. 3.1 of \cite{DOS1}), for {\it 
integer $s$} with $s+r-a+k/2>0$ which will be the case that interests us. 
It gives
\begin{multline}
\mathcal E^s_{k+r-2a, r}(z)= y^{a-r-\frac{k}{2}} 
E_{k-2a}(s+r-a+\frac{k}{2})(z) =\\
y^{a-r-\frac{k}{2}}a_0(y)+\sum_{n \ge 1}\left ( 
\frac{\sigma_{2s+k+2r-2a-1}(n)}{n^{s+r-a+\frac{k}{2}}} 
\left ( \sum_{a-\frac{k}{2} \le j \le s-1+\frac{k}{2}+r-a} \alpha_j^+ (4 
\pi n y)^{-j}\right ) e^{-2 \pi n y} 
y^{a-r-\frac{k}{2}} \right )e^{2 \pi i nx} \\
+\sum_{n \le -1}
\left ( \frac{\sigma_{2s+k+2r-2a-1}(|n|)}{|n|^{s+r-a+\frac{k}{2}}} \left ( 
\sum_{\frac{k}{2}-a \le j \le s-1+\frac{k}{2}+r-a} \alpha_j^- (4 \pi |n| y)^{-j}\right ) e^{2 \pi n y} 
y^{a-r-\frac{k}{2}} \right )e^{2 \pi i nx}. 
\end{multline}
Here $$a_0(y)=\left | \frac{k}{2}-a\right |! \Big ( 
\binom{s+r-a+\frac{k}{2}-1+\left | \frac{k}{2}-a\right |}{\left | 
\frac{k}{2}-a\right |}  
\Lambda(2s+2r-2a+k)y^{s+r-a+\frac{k}{2}}+$$
$$\binom{\left | \frac{k}{2}-a\right | 
-s-r+a-\frac{k}{2}}{\left | 
\frac{k}{2}-a\right |}\Lambda(2-2s-2r+2a-k)y^{1-s-r+a-\frac{k}{2}} 
\Big ),$$ where $\Lambda(s):=\pi^{-s/2} \Gamma(s/2) \zeta(s) $ and 
$$\alpha_j^{\pm}:=(-1)^j(j+\frac{|k-2a|}{2})! \binom{s+r-a+\frac{k}{2}-1+\frac{|k-2a|}{2}}{j+\frac{|k-2a|}{2}} 
\binom{\pm \frac{k-2a}{2}-s-r+a-\frac{k}{2}}{j \pm \frac{k-2a}{2}}.$$
Here $\binom{a}{b}$ with $a<0$ are defined in accordance to the 
convention that, if $j \ge 0$, then $$\binom{-a+j-1}{j}=(-1)^j 
\binom{a}{j}.$$

Therefore the coefficient of $e^{2 \pi i l x}$ ($l>0$) in the Fourier 
expansion of $E_{2a}(z) \mathcal E^s_{k+r-2a, r}(z)$ is
$$\sum_{n \le l} b_n(y) \sigma_{2a-1}(l-n) e^{-2 \pi (l-n)y}$$
where, if $n \ne 0$,
$$b_n(y):= \frac{\sigma_{2s+k+2r-2a-1}(|n|)}{|n|^{s+r-a+\frac{k}{2}}} 
\left ( 
\sum_{\sgn(n)(a-\frac{k}{2}) \le j \le s-1+\frac{k}{2}+r-a} 
\alpha_j^{\sgn(n)} (4 \pi |n| y)^{-j}\right ) e^{-2 \pi |n| y}
y^{a-r-\frac{k}{2}} \quad \text{and}$$
$$b_0(y):=y^{a-r-k/2}a_0(y).$$
Also, for convenience, we set $\sigma_{2a-1}(0):=-B_{2a}/(4a)$.

Therefore, with \eqref{RS} we have that 
for $\Re t \gg 0$, 
\begin{equation}\label{Fourier}
\langle E_{2a} \mathcal E_{k+r-2a, r}^s, P_l^t\rangle_{k+r}=
\int_0^{\infty} e^{-2 \pi ly} \left (\sum_{n \le l} b_n(y) 
\sigma_{2a-1}(l-n) e^{-2 \pi (l-n)y} \right ) y^{t+k+r-2} dy.
\end{equation}
Since the finitely many terms corresponding to $b_n(y)$ with $n \ge 0$ 
can be directly evaluated at $t=0$ to give
rational linear combinations of powers of $\pi$ and $i$, we will  focus 
on the 
infinitely many terms indexed by $n<0$. \begin{multline}\label{negterms}
\int_0^{\infty} e^{-2 \pi ly} \left (\sum_{n <0} b_n(y) 
\sigma_{2a-1}(l-n) e^{-2 \pi (l-n)y} \right ) 
y^{t+k+r-2} dy \\
=\sum_{\frac{k}{2}-a \le j \le s-1+\frac{k}{2}+r-a} \alpha_j^{-} (4 \pi)^{-j}
\sum_{n <0}  \frac{\sigma_{2s+k+2r-2a-1}(|n|) \sigma_{2a-1}(l-n)}{|n|^{s+r-a+\frac{k}{2}+j}} 
\int_0^{\infty} e^{-4 \pi (l-n)y}y^{\frac{k}{2}+a-j-1+t}\frac{dy}{y} 
\\
=\sum_{\frac{k}{2}-a \le j \le s-1+\frac{k}{2}+r-a} \alpha_j^{-} (4 \pi)^{1-\frac{k}{2}-a-t}
\Gamma(\frac{k}{2}+a-j-1+t)
\sum_{n >0}  \frac{\sigma_{2s+k+2r-2a-1}(n) \sigma_{2a-1}(l+n)}{n^{s+r-a+\frac{k}{2}+j} 
(n+l)^{\frac{k}{2}+a-j-1+t}} 
\end{multline}
Since $ j \le s-1+\frac{k}{2}+r-a$ we can expand binomially the term
$((n+l)-l)^{s+\frac{k}{2}+r-a-1-j}$. This, together with 
the trivial identity $\sigma_w(n)=n^w \sigma_{-w}(n)$, imply that 
the last sum of \eqref{negterms} becomes
\begin{multline} \label{finfor}
\sum_{n > 0}  \frac{\sigma_{-2s-k-2r+2a+1}(n) 
n^{s+\frac{k}{2}+r-a-1-j}\sigma_{2a-1}(l+n)}
{(n+l)^{\frac{k}{2}+a-j-1+t}} =
\sum_{\mu=0}^{s+\frac{k}{2}+r-a-1-j} \binom{s+\frac{k}{2}+r-a-1-j}{\mu} 
\times \\
(-l)^{s+\frac{k}{2}+r-a-1-j-\mu} D_l(2a-1, -2s-k-2r+2a+1; 
\frac{k}{2}+a-j-1+ t-\mu)
\end{multline}
where
$$D_l(\alpha, \beta; w):= \sum_{n > l}  \frac{\sigma_{\alpha}(n) 
\sigma_{\beta}(n-l)} {n^{w}}.$$
In view of \eqref{t=0}, we aim to investigate the series in 
\eqref{finfor} at $t=0$. We are mainly interested in the case that least 
one of $s+k+r-1$ and $s+k+r-2a$ is outside the critical strip. In this
case, the series $D_l$ appearing in \eqref{finfor} are not in the initial 
region of 
convergence when $t=0.$ However, this convolution series has been studied 
by M. K{\i}ral in \cite{K} and has given the analytic continuation. 
Here we do not need the full analytic continuation established in 
\cite{K} but rather a partial extension. 
Since \cite{K} has not appeared in print yet, we give here a proof of the 
part of the analytic continuation we need for our purposes.

For $r$ odd, set 
$$s=1, \qquad \qquad a=\frac{k+r-1}{2}.$$
These values of $s$ and $a$ satisfy the 
conditions $s+r-a+k/2>0$ and 
$a, b=(k-2a+2r+2)/2 \ge 2$ that are required for our construction. 
Further, with these values of $s, a$,
 the parameter $j$ in 
\eqref{negterms} ranges between $(1-r)/2$ and $(1+r)/2$.
For each of these values of $j$, the parameter $\mu$ in 
\eqref{finfor} ranges between $0$ and $\frac{1+r}{2}-j$ and thus
$j+\mu$ ranges between
$(1-r)/2$ and $(1+r)/2.$

Therefore, we need to show all shifted convolutions
$$D_l(k+r-2, -r-2, \nu+t); \qquad \qquad (\nu=k-2, \dots, k-2+r)$$
appearing in \eqref{finfor} have an analytic continuation in a 
neighbourhood of $t=0$.

To achieve that, we first note that Prop. 6 of \cite{K} can be adjusted 
to say that, in the region of absolute convergence, we have
\begin{equation} \label{esterm}
D_l(k+r-2, -r-2; \nu+t)=\zeta(3+r) \sum_{m=1}^{\infty} 
m^{-3-r} \sum_{\substack{x \mod m \\ (x, m)=1}} e^{\frac{-2 \pi i x 
l}{m}}
E_l(\nu+t, k+r-2; \frac{x}{m})
\end{equation}
where
$E_l(s, \alpha; \frac{x}{m})$
the (truncated) Estermann zeta function, defined, for $\Re(s) \gg 0$, by
$$E_l(s, \alpha; \frac{x}{m})
:=\sum_{n > l} 
\frac{\sigma_{\alpha}(n) e^{2\pi i \frac{xn}{m}}}{n^s} \qquad s, \alpha 
\in \C, x/m \in \Q.$$
The (full) Estermann zeta function $E(s, \alpha; \frac{x}{m}):=
E_0(s, \alpha; \frac{x}{m})$ has a meromorphic continuation through its 
expression in terms of Hurwitz zeta functions $\zeta(s, x)$. This 
expression, in our 
case, can be stated, for $\Re(t) \gg 0$ as:
$$E(\nu +t, k+r-2; \frac{x}{m})=m^{k+r-2-2\nu-2t}\sum_{u, v=1}^m
e^{2\pi i \frac{xuv}{m}} 
\zeta(-k-r+2+\nu+t, \frac{u}{m})
\zeta(\nu+t, \frac{v}{m}).
$$
It is well-known that $\zeta(s, x)$ has a meromorphic continuation to 
$\mathbb C$, with only a simple pole at $s=1$. Therefore the 
RHS is analytic in a neighbourhood of $t=0$, since $\nu \in \{k-2, \dots, 
k-2+r\}$, thus giving the analytic continuation the LHS in this 
neighbourhood. Further, when $x \in (0, 1),$ $\zeta(\nu+t, x)$ is bounded 
by a constant independent of $x$, since $\nu+\Re(t)>1$. 
By the functional equation of 
$\zeta(s, x)$ (or, in the case $\nu=k+r-2$, the Taylor expansion of 
$\zeta(s, x+1)$ at $x=0$), we deduce that, 
for $t$ in a small neighbourhood of $0$
$$\zeta(-k-r+2+\nu+t, \frac{u}{m}) \ll_{k, r} m^{t}+1$$
Therefore, 
$$E(\nu +t, k+r-2; \frac{x}{m}) \ll_{k, r} 
m^{k+r-2\nu-t}+m^{k+r-2\nu-2t}.$$
Applying this bound to the series of the RHS of \eqref{esterm}, we see 
that each term is holomorphic in a neighbourhood of $t=0$ and bounded by 
$m^{k-2\nu-t-1}+m^{k-2\nu-2t-1}$. Therefore, the function 
$D_{l}(k+r-2, -r-2, \nu+t)$ 
is 
holomorphic at 
$t=0$ for all $\nu \in \{k-2, \dots, k-2+r\}$.

For $r$ even, set 
$$s=1, \qquad \qquad a=\frac{k+r}{2}.$$
Working exactly in the same way as above, we find that 
$D_{l}(k+r-1, -r-1, \nu+t)$ 
is holomorphic at 
$t=0$ for all $\nu \in \{k-1, \dots, k-1+r\}$.

\medskip

{\it Proof of Theorem \ref{main}:} 
As mentioned above, the terms corresponding to non-negative $n$ in the 
sum in \eqref{Fourier} lead to elements of $\Q(\pi, i)$. 

Let $r$ be odd. What we just 
proved, together with 
\eqref{Fexp} then imply that 
the $l$-th Fourier coefficient 
of 
$\EEE^*_{k+r, -r}(z, 2)$ 
belongs to the field 
$$
\mathcal D_r=\Q(\pi, i, D_l(k+r-2, -r-2; k-2), 
\dots, D_l(k+r-2, -r-2; k-2+r))
$$
obtained by adjoining to $\Q(\pi, i)$ the values of the shifted 
convolution $D_l(k+r-2, -r-2; n)$ at $n=k-2, \dots k-2+r.$

On the other hand, \eqref{lslw} implies that 
$$\EEE^*_{k+r, -r}(z, 2)=\EEE^*_{2, k-2}(z, k+r).$$
Therefore, with Proposition \ref{Period} for $s_0=2$ and odd $r' \ge 1$ 
$$\frac{L^*_f(k+r)}{L^*_f(k+r')} \in \mathcal D_r \mathcal D_{r'}
\Q(f)$$
Setting $r'=1$, we deduce the first case of Theorem \ref{main}.

Let $r$ be even. In the same way as above we find that
the $l$-th Fourier coefficient of $\EEE^*_{k+r, -r}(z, 1)=
\EEE^*_{1, k-1}(z, k+r)$ belongs to the field 
$$
\Q(\pi, i, D_l(k+r-1, -r-1; k-1), D_l(k+r-1, -r-1; k), \dots, 
D_l(k+r-1, -r-1; k-1+r))
$$
Applying Proposition \ref{Period} with $s_0=1$, we deduce, for even 
$r'>1$, 
$$\frac{L^*_f(k+r)}{L^*_f(k+r')} \in \mathcal D_r \mathcal D_{r'}
\Q(f)$$
where $\mathcal D_r$ denotes the field 
obtained by adjoining to $\Q(\pi, i)$ the values of the shifted 
convolution $D_l(k+r-1, -r-1; n)$ at $n=k-1, \dots k-1+r.$
Setting $r'=2$, we deduce the second case of Theorem \ref{main}.
 \qed

\end{document}